\theoremstyle{plain}
\newtheorem{theorem}{Theorem}[section]
\newtheorem{lemma}[theorem]{Lemma}
\newtheorem{proposition}[theorem]{Proposition}
\newtheorem{corollary}[theorem]{Corollary}
\newtheorem*{theorem*}{Theorem}
\theoremstyle{definition}
\newtheorem{example}[theorem]{Example}
\newtheorem{definition}[theorem]{Definition}
\theoremstyle{remark}
\newtheorem{remark}[theorem]{Remark}
\renewcommand{\AA}{\mathcal A}
\newcommand{\MM}{\mathcal M}
\newcommand{\DD}{\mathcal D}
\newcommand{\OO}{\mathcal O}
\newcommand{\modcat}{\mathrm{-mod}}
\newcommand{\series}[1]{[\! [ #1]\!]}
\DeclareMathOperator{\Spec}{Spec}
\DeclareMathOperator{\Hom}{Hom}
\DeclareMathOperator{\End}{End}
\DeclareMathOperator{\Ext}{Ext}
\DeclareMathOperator{\sEnd}{\underline{End}}
\DeclareMathOperator{\Tot}{Tot}
\title[
    Hochschild cohomology of differential operators
]{
    Hochschild cohomology of differential operators in positive characteristic
}
\author{Joshua Mundinger}
\address{
    University of Wisconsin-Madison\\ 
    Madison, WI, USA
}
\email{jmundinger@wisc.edu}
\date{April 16, 2024}
\subjclass[2020]{Primary: 14F10; Secondary: 16E40, 14G17}
\begin{document}

\begin{abstract}
    For $k$ a field of positive characteristic and $X$ a smooth variety over $k$, we compute the Hochschild cohomology of Grothendieck's differential operators on $X$. The answer involves the derived inverse limit of the Frobenius acting on the cohomology of the structure sheaf of $X$.
\end{abstract}

\maketitle
\vspace{-1em}
\section{Introduction}

Algebras of polynomial differential operators are objects used throughout mathematics, sitting at the interface of analysis, geometry, topology, and algebra. 
Over a field of characteristic zero,
the algebra of differential operators $\DD_X$ on a variety $X$ consists of operators locally of the form 
$\sum_I f_I \frac{\partial^{i_1}}{\partial^{i_1} x_1}\cdots \frac{\partial^{i_n}}{\partial^{i_n} x_n}$
where $x_1,\ldots, x_n$ are local coordinates and $f_I$ are regular functions on $X$.
One can also consider differential operators over a field of positive characteristic; then the definition must be modified to account for the identity $(d/dx)^px^n = 0$ in characteristic $p$. In this paper, we study differential operators with divided powers, formulated by Grothendieck in connection with crystalline cohomology. This algebra includes operators such as $(p!)^{-1}(d/dx)^p$. Precise definitions are given in §\ref{subsection: differential-operators} below. 

Hochschild cohomology is a fundamental invariant of an associative algebra $A$. As a ring, the Hochschild cohomology $HH^*(A)$ coincides with self-extensions of the $A$-bimodule $A$. Gerstenhaber showed that the bar complex of the $A$-bimodule $A$ controls deformations of the product on $A$ \cite{gerstenhaber64}. Since then, Hochschild cohomology has attracted considerable interest due to its connection to deformations.

In this note, we prove:

\begin{theorem*}[see Theorem \ref{theorem: main}]
    Let $k$ be a field of positive characteristic, $X$ a smooth variety over $k$, 
    and $\DD_X$ the ring of differential operators on $X$.
    Then there are short exact sequences 
    \[ 
        0 \to R^1\varprojlim_r H^{m-1}(X,\OO_X)^{(r)} \to HH^m(\DD_X) \to \varprojlim_r H^m(X,\OO_X)^{(r)} \to 0
    \]
    for all $m \geq 0$, where the inverse limit is taken over the sequence of relative Frobenius maps $Fr: H^\ast(X,\OO_X)^{(r+1)} \to H^\ast(X,\OO_X)^{(r)}$.
\end{theorem*}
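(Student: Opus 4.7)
The plan is to use the filtration $\mathcal{D}_X = \varinjlim_r \mathcal{D}_X^{(r)}$, where $\mathcal{D}_X^{(r)}$ denotes the subalgebra of differential operators of order less than $p^r$, together with the identification $\mathcal{D}_X^{(r)} \cong \sEnd_{\mathcal{O}_{X^{(r)}}}(F^r_*\mathcal{O}_X)$ coming from the action on $\mathcal{O}_X$, where $F^r : X \to X^{(r)}$ denotes the $r$-th relative Frobenius. Since $F^r_*\mathcal{O}_X$ is locally free of finite rank on $X^{(r)}$, this exhibits $\mathcal{D}_X^{(r)}$ as a sheaf of matrix algebras over $\mathcal{O}_{X^{(r)}}$, and in particular Morita equivalent to $\mathcal{O}_{X^{(r)}}$ as a sheaf of algebras. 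Morita invariance of Hochschild cohomology then yields $HH^m(\mathcal{D}_X^{(r)}) \cong HH^m(\mathcal{O}_{X^{(r)}}) \cong H^m(X, \mathcal{O}_X)^{(r)}$.

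I would next verify that the transition maps $HH^m(\mathcal{D}_X^{(r+1)}) \to HH^m(\mathcal{D}_X^{(r)})$, induced by the inclusions $\mathcal{D}_X^{(r)} \hookrightarrow \mathcal{D}_X^{(r+1)}$, correspond under the Morita equivalences to the relative Frobenius pullback $F^* : H^m(X, \mathcal{O}_X)^{(r+1)} \to H^m(X, \mathcal{O}_X)^{(r)}$. This should follow by tracking the Morita bimodules along the factorization $F^{r+1} = F \circ F^r$ and the consequent identification $F^{r+1}_*\mathcal{O}_X = F_*(F^r_*\mathcal{O}_X)$ of matrix-algebra presentations across the filtration.

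To pass from the $HH^*(\mathcal{D}_X^{(r)})$ to $HH^*(\mathcal{D}_X)$, I would use the adjunction
\[
    \mathrm{RHom}_{\mathcal{D}_X^e}(\varinjlim_r \mathcal{D}_X^{(r)}, -) \simeq R\varprojlim_r \mathrm{RHom}_{\mathcal{D}_X^e}(\mathcal{D}_X^{(r)}, -),
\]
combined with an identification $\mathrm{RHom}_{\mathcal{D}_X^e}(\mathcal{D}_X^{(r)}, \mathcal{D}_X) \simeq HH^*(\mathcal{D}_X^{(r)})$, to establish $HH^*(\mathcal{D}_X) \simeq R\varprojlim_r HH^*(\mathcal{D}_X^{(r)})$. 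The asserted short exact sequence is then the Milnor exact sequence for $R\varprojlim$ of the inverse system of abelian groups $HH^*(\mathcal{D}_X^{(r)})$.

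The main technical obstacle, in my view, is the last identification $\mathrm{RHom}_{\mathcal{D}_X^e}(\mathcal{D}_X^{(r)}, \mathcal{D}_X) \simeq HH^*(\mathcal{D}_X^{(r)})$: naively restricting the right-hand $\mathcal{D}_X$ to $\mathcal{D}_X^{(r)}$ and the bimodule algebra $\mathcal{D}_X^e$ to $\mathcal{D}_X^{(r),e}$ is not automatic. Bridging this gap requires a careful change-of-rings/base-change argument, which should reduce, via the Morita equivalence with $\mathcal{O}_{X^{(r)}}$, to elementary statements about $\mathcal{O}_{X^{(r)}}$-modules and flatness of $\mathcal{D}_X^e$ over $\mathcal{D}_X^{(r),e}$.
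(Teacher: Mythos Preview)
Your proposal has a genuine gap at the step claiming $HH^m(\mathcal{D}_X^{(r)}) \cong HH^m(\mathcal{O}_{X^{(r)}}) \cong H^m(X,\mathcal{O}_X)^{(r)}$. The first isomorphism is fine by Morita invariance, but the second is false: for a smooth variety $Y$, Hochschild cohomology of $\mathcal{O}_Y$ is governed by Hochschild--Kostant--Rosenberg and involves polyvector fields, not merely $H^*(Y,\mathcal{O}_Y)$. Already for $Y=\mathbb{A}^1$ one has $HH^1(k[t]) = k[t]\,\partial_t \neq 0$, while $H^1(\mathbb{A}^1,\mathcal{O})=0$.

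The paper avoids this by a crucial change: it computes $HH^*(\mathcal{D}_X^r,\mathcal{D}_X)$, i.e.\ Hochschild cohomology of the subalgebra with coefficients in the \emph{bimodule} $\mathcal{D}_X$, not in $\mathcal{D}_X^r$ itself. Under the Morita equivalence with $\mathcal{O}_{X^{(r)}}$, the bimodule $\mathcal{D}_X$ corresponds to $\mathcal{D}_{X^{(r)}}$, and the local computation of $HH^*(\mathcal{O}(U),\mathcal{D}(V))$ \emph{does} collapse to $\mathcal{O}(V)$ in degree zero: the Koszul complex against $\mathcal{D}$ becomes a tensor product of two-term complexes $\mathcal{O}\langle\partial_i\rangle \xrightarrow{[x_i,-]}\mathcal{O}\langle\partial_i\rangle$, each of which is surjective (since $[x_i,\partial_i^{(q)}]=-\partial_i^{(q-1)}$ on divided powers) with kernel $\mathcal{O}$. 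The presence of all divided powers in the coefficient module is exactly what kills the polyvector-field contributions that would otherwise appear. Consequently the ``technical obstacle'' you flag---identifying $\mathrm{RHom}_{\mathcal{D}_X^e}(\mathcal{D}_X^{(r)},\mathcal{D}_X)$ with $HH^*(\mathcal{D}_X^{(r)})$---is not a bridgeable gap but a genuine discrepancy between two different objects. The paper never needs to bridge it: since the Hochschild cochains satisfy $C^*(\mathcal{D}_X,\mathcal{D}_X)=\varprojlim_r C^*(\mathcal{D}_X^r,\mathcal{D}_X)$ with surjective transition maps, the Milnor sequence relates $HH^*(\mathcal{D}_X)$ directly to $\varprojlim_r HH^*(\mathcal{D}_X^r,\mathcal{D}_X)$, and a spectral-sequence argument over an affine cover (via the Gerstenhaber--Schack complex) globalizes the local computation to $HH^m(\mathcal{D}_X^r,\mathcal{D}_X)\cong H^m(X,\mathcal{O}_X^{(r)})$.
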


We also consider the cup product structure on $HH^*(\DD_X)$ in §\ref{section: cup-product}. There it is shown that the map $HH^*(\DD_X) \to \varprojlim_r H^m(X,\OO_X^{(r)})$ is a map of graded rings.

Over the field $\mathbb C$ of complex numbers, it follows from work of Wodzicki that $HH^*(\DD_{X}) \cong H^\ast_{dR}(X/\mathbb C)$ for smooth affine $X$ \cite{wodzicki87}. In fact, in \textit{loc cit.} Wodzicki calculates the Hochschild homology as $HH_*(\DD_{X/\mathbb C}) \cong H_{dR}^{2\dim X - \ast}(X/\mathbb C)$, reflecting that $\DD_{X/\mathbb C}$ is a Calabi-Yau algebra of dimension $2 \dim X$. One year later, Wodzicki calculated the Hochschild homology of $\DD_X$ for $X$ smooth affine over $k$ of positive characteristic \cite{wodzicki88}. He showed that $HH_*(\DD_X)$ is concentrated in degree $n = \dim X$ and $HH_n(\DD_X) = \Omega^n_X / B_\infty \Omega^n_X$, where $B_\infty \Omega^n_X$ are the \emph{potentially exact forms}. Unlike in characteristic zero, Wodzicki's method does not apply to Hochschild cohomology. Our results show $\DD_X$ over a field of positive characteristic is not Calabi-Yau, as for affine $X$, $HH^*(\DD_X)$ will have amplitude exactly $[0,1]$.

If $X$ is not affine, then $\DD_X$ is not an associative algebra but rather a sheaf of such; this makes defining an appropriate category of bimodules more difficult. Hochschild cohomology of ringed spaces and, more generally, abelian categories has been developed by Lowen and Van den Bergh \cite{lowenvandenbergh05}, following earlier work by Gerstenhaber and Schack on cohomology of diagrams of associative algebras \cite{gerstenhaberschack83, gerstenhaberschack88}. Lowen and Van den Bergh showed that if $\AA$ is a sheaf of algebras on $X$ which is locally acyclic with respect to a basis $\mathcal B$, then the Gerstenhaber-Schack cohomology of the diagram $\AA_{\mathcal B}$ coincides with Hochschild cohomology of the category of $\AA$-modules \cite[§7.3]{lowenvandenbergh05}. This paper uses Gerstenhaber and Schack's definition of Hochschild cohomology of a diagram (see §\ref{section: gerstenhaber-schack-complex}); as $\DD_X$ is quasi-coherent, our computation thus agrees with categorical Hochschild cohomology.

We should point out that Ogus obtains very similar formulas for $\Ext^*_{\DD_X}(\OO_X,\OO_X)$ in \cite{ogus75}. The relationship is explained in Remark \ref{remark: ogus}. 

\vspace{.2in}
\noindent\textbf{Acknowledgments.}
{
    The author was supported by NSF Graduate Research Fellowship DGE 1746045. 
    The author thanks Luc Illusie for pointing out the paper \cite{ogus75} and Tim Porter for the reference \cite{abelsholz93}.
}

\section{Preliminaries}

\subsection{Differential operators}
\label{subsection: differential-operators}

Let $k$ be a field.
A \emph{smooth variety} $X$ over $k$ will mean a morphism of schemes $X \to \Spec k$ which is smooth, separated, and of finite type. We recall Grothendieck's definition of differential operators on a smooth variety, following \cite[§2.1]{berthelot-ogus-crystalline}.

\begin{definition}
    \cite[Definition 2.1]{berthelot-ogus-crystalline}
    For a smooth variety $X$ over $k$, let $I$ be the ideal of the diagonal in $X \times_k X$, that is, $I = \langle f \otimes 1 - 1 \otimes f \rangle \subseteq \OO_X \otimes_k \OO_X$.
    A $k$-linear map of sheaves $h : \OO_X \to \OO_X$ is a \emph{differential operator of order $m$} if and only if the linearization $h: \OO_X \otimes_k \OO_X \to \OO_X$ is annihilated by $I^{m+1}$.
\end{definition}
A map $h$ is a differential operator of order $m$ if and only if for all local sections $f_0,f_1,\ldots, f_m$ of $\OO_X$, we have $[f_0,[f_1,[\ldots,[f_m,h]]]] = 0$.
A degree zero differential operator is $\OO_X$-linear and thus is a section of $\OO_X$.
Differential operators form a filtered sheaf of rings $\DD_X$ with a map $\OO_X \to \DD_X$ from the inclusion of degree zero differential operators.
As a left or right $\OO_X$-module, the sheaf $\DD_X$ is quasi-coherent.

The sheaf $\DD_X$ contains differential operators ``with divided powers.'' For example, if $\partial = d/dt$ is the usual derivative acting on $k[t]$, the operator $\partial^q$ is divisible by $q!$, as $\partial^q t^m = q! \binom{m}{q}t^{m-q}$. Thus $\frac{1}{q!} \partial^q: t^m \mapsto \binom{m}{q}t^{m-q}$ makes sense, regardless of the characteristic of $k$.
The following proposition states that $\DD_X$ locally has a basis of such operators:

\begin{proposition}\cite[Proposition 2.6]{berthelot-ogus-crystalline} \label{proposition: basis-of-divided-powers}
    Suppose $X$ is a smooth variety and $\{x_1,\ldots, x_n\} \subseteq \Gamma(X,\OO_X)$ such that $\{dx_1,\ldots, dx_n\}$ is a basis for $\Omega^1_X$. Then as either a left or a right $\OO_X$-module,
    $\DD_X$ has a basis of the form 
    \[ \{ \partial_1^{(q_1)}\partial_2^{(q_2)}\cdots \partial_n^{(q_n)} \mid q_1,q_2,\ldots, q_n \geq 0\} \]
    where $\partial_i^{(q_i)}$ are differential operators such that
    \begin{itemize}
        \item $\partial_i^{(0)} = 1$ for all $i$;
        \item the operators $\{\partial_i^{(1)} = \partial_i\}_{i=1}^n$ are derivations dual to the 1-forms $\{dx_i\}_{i=1}^n$;
        \item $\partial_i^{(q)}\partial_i^{(q')} = \binom{q+q'}{q} \partial_i^{(q+q')}$ for all $i$ and $q,q' \geq 0$;
        \item the following commutator relations hold:
        \begin{align*}
            [\partial_i^{(q_i)}, \partial_j^{(q_j)}] &= 0,&[\partial_i^{(q_i)},x_j] &= \begin{cases}
                0 & i \neq j\\
                \partial_i^{(q_i - 1)} & i = j .
            \end{cases}  
        \end{align*}
    \end{itemize}
\end{proposition}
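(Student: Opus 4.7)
The plan is to identify $\DD_X^{\leq m}$ with the $\OO_X$-dual of the sheaf of principal parts $\mathcal P^m_X := (\OO_X \otimes_k \OO_X)/I^{m+1}$ via $\DD_X^{\leq m} = \sHom_{\OO_X}(\mathcal P^m_X, \OO_X)$, and to exhibit the $\partial_i^{(q)}$ as dual basis elements with respect to an explicit monomial basis of $\mathcal P^m_X$.

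The first substantive step is to show that, writing $\xi_i := 1 \otimes x_i - x_i \otimes 1$, the sheaf $\mathcal P^m_X$ is free as a left $\OO_X$-module on the monomials $\xi^Q := \xi_1^{q_1} \cdots \xi_n^{q_n}$ with $|Q| \leq m$. Smoothness enters here: the hypothesis that $\{dx_i\}$ trivializes $\Omega^1_X$ forces the conormal sheaf $I/I^2 \cong \Omega^1_X$ to be free on the classes $\overline{\xi}_i$, and smoothness of $X/k$ then implies that the associated graded $\bigoplus_{k \geq 0} I^k/I^{k+1}$ is the symmetric algebra on $I/I^2$, giving the claimed monomial basis. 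Equivalently, one can reduce to the polynomial case: the chosen coordinates induce an étale morphism $X \to \Spec k[t_1, \ldots, t_n]$, and the principal parts sheaf is compatible with étale base change.

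With this basis in hand, I would define $\partial^{(Q)} \in \DD_X^{\leq m}$ to be the $\OO_X$-linear functional on $\mathcal P^m_X$ dual to $\xi^Q$, and set $\partial_i^{(q)} := \partial^{(q e_i)}$. The asserted left $\OO_X$-basis of $\DD_X = \bigcup_m \DD_X^{\leq m}$ then holds by construction, and the right $\OO_X$-module version follows symmetrically from the right action on $\mathcal P^m_X$. The remaining identities are most transparent from the induced action on the monomial basis $x^K$ of $\OO_X$, which one checks is $\partial_i^{(q)}(x^K) = \binom{k_i}{q} x^{K - q e_i}$. From here $\partial_i^{(0)} = 1$ and $\partial_i^{(1)} = \partial_i$ are immediate; the divided-power relation $\partial_i^{(q)}\partial_i^{(q')} = \binom{q+q'}{q}\partial_i^{(q+q')}$ reduces to the Chu--Vandermonde identity $\binom{k_i}{q'}\binom{k_i-q'}{q} = \binom{q+q'}{q}\binom{k_i}{q+q'}$, and the commutator relations admit a similar direct verification.

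The main obstacle is really the first step: establishing freeness of $\mathcal P^m_X$ on the monomial basis in the $\xi_i$. This is where smoothness is genuinely used; the subsequent dualization and the verifications of the bulleted properties are formal, relying only on elementary binomial identities.
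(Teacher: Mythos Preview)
The paper does not give its own proof of this proposition; it simply cites \cite[Proposition 2.6]{berthelot-ogus-crystalline}. Your outline is precisely the standard argument found there (and in EGA~IV, \S16): identify $\DD_X^{\leq m}$ with the $\OO_X$-dual of $\mathcal P^m_X$, use smoothness to show $\mathcal P^m_X$ is free on the monomials $\xi^Q$ with $|Q|\leq m$, and take the dual basis. So there is nothing to compare; your approach \emph{is} the cited one.

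One small imprecision worth flagging: you verify the algebraic identities ``on the monomial basis $x^K$ of $\OO_X$'', but for a general \'etale chart $\OO_X$ is not a polynomial ring and has no such basis. You have two clean fixes, both implicit in what you wrote. Either (i) carry out the verification entirely inside $\mathcal P^m_X$ using the coproduct $\delta:\mathcal P^{m+m'}_X \to \mathcal P^m_X \otimes_{\OO_X} \mathcal P^{m'}_X$, which satisfies $\delta(\xi_i)=\xi_i\otimes 1 + 1\otimes \xi_i$ and hence $\delta(\xi^Q)=\sum_{Q'+Q''=Q}\binom{Q}{Q'}\xi^{Q'}\otimes\xi^{Q''}$, from which the divided-power and commutator relations drop out by duality; or (ii) invoke the \'etale base change you already mentioned to reduce to $\mathbb A^n_k$, where your monomial computation is literally valid, and then transport the identities back since the $\partial_i^{(q)}$ are pulled back along the \'etale map. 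Either way the argument is complete.
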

We should think of $\partial_i^{(q)}$ as $\frac{1}{q!}(d/dx_i)^q$, and indeed the commutator relations imply $\partial_i^{(q)}(x_i^m) = \binom{m}{q}x_i^{m-q}$.

\subsection{Frobenius twists and matrix subalgebras}
\label{subsection: frob-twist}

In this section, assume that $k$ has characteristic $p > 0$. 

Consider the setting of Proposition \ref{proposition: basis-of-divided-powers}. The derivations $\{\partial_i\}_{i=1}^n$ do not generate the algebra $\DD_X$ over $\OO_X$. Instead, they generate an associative algebra of finite type over $\OO_X$, centralizing the $p$th power of any element of $\OO_X$.  
To deal with rings of $p$th powers, we use the standard language of Frobenius twists.
For any scheme $S$ of characteristic $p$, let $F_S: S \to S$ be the absolute Frobenius morphism on $S$, which is the identity on $|S|$ and acts on functions by $f \mapsto f^p$.

\begin{definition}
    The \emph{Frobenius twist} $X^{(1)}$ of $X\to \Spec k$ is the base change $X \times_{\Spec k} (\Spec k, F_{\Spec k})$ of $X$ along the absolute Frobenius of $\Spec k$.
\end{definition}
The Frobenius twist fits into a diagram 
\begin{equation*}
\begin{tikzcd}[cramped]
    X \arrow[rd] \arrow[r, "Fr_{X/k}"] & X^{(1)} \arrow[r] \arrow[d] & X \arrow[d] \\
        & \Spec k \arrow[r, "F_{\Spec k}"]           & \Spec k          
\end{tikzcd}
\end{equation*}
where the composition $X \to X^{(1)} \to X$ is the absolute Frobenius $F_X$ of $X$.
The induced map $Fr_{X/k}: X \to X^{(1)}$ over $\Spec k$ is called the \emph{relative Frobenius}.
When there is no risk of confusion, we will write $Fr$ instead of $Fr_{X/k}$.
If $X$ is a smooth variety over $k$, then $Fr$ is a homeomorphism and the induced map $Fr: \OO_{X^{(1)}} \to Fr_\ast \OO_X$ is an inclusion making $Fr_\ast \OO_X$ a finite free $\OO_{X^{(1)}}$-module of rank $p^{\dim X}$ \cite[1.1.1 Lemma]{brionkumar}.
We may identify the image of $\OO_{X^{(1)}}$ in $\OO_X$ with the Frobenius twist $\OO_X^{(1)}$ of the sheaf $\OO_X$.
We will also write $Fr^r: X \to X^{(r)}$ to mean the $r$-fold composition of relative Frobenius maps $X \to X^{(1)} \to \cdots \to X^{(r)}$.

\begin{definition}
    The subalgebra $\DD^r_X$ is the centralizer of $\OO_{X}^{(r)}$ in $\DD_X$.
\end{definition}

\begin{proposition} \cite[Lemma 3.3]{chase74}
    \label{proposition: subalgebra}
    If $X$ is a smooth variety over $k$,
    then the action of $\DD_X$ on $\OO_X$ induces an isomorphism $\DD^r_X \to \sEnd_{X^{(r)}}(Fr^r_\ast\OO_X)$,
    and $\DD_X = \bigcup_{r\geq 0} \DD^r_X$.
\end{proposition}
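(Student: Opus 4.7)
The argument is local, so by Proposition \ref{proposition: basis-of-divided-powers} I work with étale coordinates $x_1, \ldots, x_n$ and the $\OO_X$-basis $\{\partial^{(q)} := \partial_1^{(q_1)}\cdots \partial_n^{(q_n)}\}$ of $\DD_X$. The commutator relations yield by induction the divided-power Leibniz rule, from which one computes $[\partial_i^{(q_i)}, x_i^{p^r}] = \sum_{0 < a \leq q_i}\binom{p^r}{a}\, x_i^{p^r - a}\, \partial_i^{(q_i - a)}$. Since $\binom{p^r}{a}$ vanishes modulo $p$ for $0 < a < p^r$ by Lucas's theorem, this commutator is zero when $q_i < p^r$ and equals $\partial_i^{(q_i - p^r)}$ otherwise. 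Because $\OO_X^{(r)}$ is locally generated as a $k$-subalgebra of $\OO_X$ by the $x_j^{p^r}$, and because the $\{\partial^{(q)}\}$ are $\OO_X$-linearly independent, this identifies $\DD^r_X$ with the free $\OO_X$-submodule $M_r \subseteq \DD_X$ spanned by $\{\partial^{(q)} : 0 \leq q_i < p^r\}$, of $\OO_X$-rank $p^{rn}$.

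Next, iterating the Brion--Kumar lemma cited after Proposition \ref{proposition: basis-of-divided-powers} shows that $Fr^r_\ast \OO_X$ is locally $\OO_{X^{(r)}}$-free of rank $p^{rn}$ with basis $\{x^a : 0 \leq a_i < p^r\}$, so $\sEnd_{\OO_{X^{(r)}}}(Fr^r_\ast \OO_X)$ is locally the matrix algebra $M_{p^{rn}}(\OO_{X^{(r)}})$, which is free of rank $p^{rn}$ as an $\OO_X$-module via left multiplication. Thus the action gives an $\OO_X$-linear map $\psi_r : M_r \to \sEnd_{\OO_{X^{(r)}}}(Fr^r_\ast \OO_X)$ between locally free modules of the same rank, and it suffices to prove $\psi_r$ surjective. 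From $\partial^{(q)}(x^s) = \prod_i \binom{s_i}{q_i}\, x^{s-q}$, in the basis $\{x^s : s_i < p^r\}$ the matrix of $\partial^{(q)}$ has a $1$ at position $(0, q)$ with other nonzero entries only in columns $s > q$ componentwise; in the top case $q = (p^r - 1, \ldots, p^r - 1)$ no such $s$ lies in range, so $\partial^{(q)} = E_{0, q}$ exactly. Descending induction on $|q|$ then extracts each $E_{0, q}$ by subtracting the matrix units $E_{s - q, s}$ for $s > q$ already produced, and left-multiplication by $x^a$ with $a_i < p^r$ yields all $E_{a, q}$, furnishing an $\OO_X^{(r)}$-basis of the target inside the image. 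Organizing this triangularization is the main technical step.

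Finally, $\DD_X = \bigcup_r \DD^r_X$ is immediate, since any local section of $\DD_X$ is a finite combination $\sum_q f_q \partial^{(q)}$ and lies in $M_r$ for $r$ chosen so that $q_i < p^r$ for every multi-index $q$ appearing in the sum.
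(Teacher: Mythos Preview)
Your approach is genuinely different from the paper's and considerably more hands-on. The paper argues via cofinality of the two chains of ideals $I \supseteq I^2 \supseteq \cdots$ and $I \supseteq I^{(1)} \supseteq I^{(2)} \supseteq \cdots$ in $\OO_X \otimes_k \OO_X$, where $I^{(r)} = \langle f^{p^r}\otimes 1 - 1\otimes f^{p^r}\rangle$: since $X$ is of finite type these chains are cofinal, so every $\OO_X^{(r)}$-linear endomorphism of $\OO_X$ is a differential operator (surjectivity) and every differential operator is $\OO_X^{(r)}$-linear for some $r$ (the union statement). Your explicit coordinate-and-matrix argument instead produces a concrete $\OO_X$-basis for $\DD^r_X$ and builds all the matrix units by hand; this yields more information but is tied to a choice of \'etale coordinates, whereas the paper's cofinality argument is coordinate-free.

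There is, however, a genuine gap in your write-up. The assertion that ``$\OO_X^{(r)}$ is locally generated as a $k$-subalgebra of $\OO_X$ by the $x_j^{p^r}$'' is false when $X \to \mathbb A^n$ is merely \'etale and not an open immersion: for instance, on the affine part of an elliptic curve with \'etale coordinate $x$, the element $y^{p^r}$ lies in $\OO_X^{(r)}$ but not in $k[x^{p^r}]$. Your commutator computation therefore only gives the inclusion $\DD^r_X \subseteq M_r$, not the equality you need before you can define $\psi_r$ as a map into $\sEnd_{\OO_{X^{(r)}}}(Fr^r_\ast \OO_X)$. The fix is easy: for any $h \in \OO_X$ and $1 \leq a < p^r$ one has $\partial_i^{(a)}(h^{p^r}) = 0$, because $h^{p^r}\otimes 1 - 1 \otimes h^{p^r} = (h\otimes 1 - 1\otimes h)^{p^r} \in I^{p^r}$ while $\partial_i^{(a)}$ has order $a < p^r$; the divided-power Leibniz rule then gives $[\partial^{(q)}, h^{p^r}] = 0$ whenever each $q_i < p^r$, so $M_r \subseteq \DD^r_X$. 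Alternatively, your own matrix-unit computation shows a posteriori that the image of $M_r$ in $\sEnd_k(\OO_X)$ is exactly the $\OO_X^{(r)}$-linear endomorphisms, which also closes the gap if you reorder the logic so that $\psi_r$ is first defined as a map to $\sEnd_k(\OO_X)$.
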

\begin{proof}
    The map $\DD_X^{r} \to \sEnd_{X^{(r)}}(Fr^r_\ast\OO_X)$ is injective.
    For $r \geq 1$, 
    Let $I^{(r)} = \langle f^{p^r}\otimes 1 - 1 \otimes f^{p^r} \rangle \subseteq \OO_X \otimes_k \OO_X$.
    Since $X$ is of finite type, the chains of ideals $I \supseteq I^2 \supseteq \cdots$ and $I \supseteq I^{(1)} \supseteq I^{(2)} \supseteq \cdots$ are cofinal.
    Hence every $\OO_{X^{(r)}}$-linear map of $Fr^r_\ast \OO_X$ is a differential operator,
    so $\DD^r_X\to \sEnd_{X^{(r)}}(Fr^r_\ast\OO_X)$ is surjective.
    Cofinality also implies every differential operator is linear over some $\OO_X^{(r)}$, so $\DD_X = \bigcup_{r\geq 0}\DD^r_X$.
\end{proof}


\begin{remark}
    The subalgebras $\DD^r_X$ are Morita equivalent to $\OO_X^{(r)}$, realizing Katz's equivalence of $\DD_X$-modules with stratified sheaves on $X$ \cite{gieseker75}.
\end{remark}

\section{The Gerstenhaber-Schack complex}
\label{section: gerstenhaber-schack-complex}

Let $k$ be any field. We now describe the Gerstenhaber-Schack complex for computing Hochschild cohomology \cite[(6.5)]{gerstenhaberschack88}.
The construction begins with a poset $J$, a presheaf of $k$-algebras $\AA$ on $J$, and a presheaf of $\AA$-bimodules $\MM$.
Let $N(J)$ be the nerve of the poset $J$ in the sense of category theory: it is the simplicial complex with $i$-simplices the chains $\sigma = (U_0 < \cdots < U_i)$ for $U_0,\ldots,U_i \in J$. Any such simplex $\sigma$ has a maximal element $\max \sigma = U_i$ and a minimal element $\min\sigma = U_0$. Then the Gerstenhaber-Schack complex is the double complex 
with cochains 
\begin{equation}
    \label{eq: gerstenhaber-schack-complex}
    C^{i,j}(\AA, \MM) = \prod_{\sigma \in N(J)^i} C^j(\AA(\max \sigma), \MM(\min \sigma)),
\end{equation}
where $C^\ast(-,-)=\Hom_k(-^{\otimes \ast},-)$ are the usual Hochschild cochains.
The horizontal differential is the differential on simplicial cochains on $N(J)$ with respect to the system of local coefficients $\sigma \mapsto C^j(\AA(\max \sigma),\MM(\min \sigma))$.
The vertical differential is the Hochschild differential. Then Hochschild cohomology of $(\AA,\MM)$ is defined by
\[ HH^*(\AA,\MM) = H^*(\Tot C^{\ast,\ast}(\AA,\MM)).\]
In case $J$ is a single point, the Gerstenhaber-Schack complex coincides with the usual Hochschild complex of an associative algebra.

We now consider the standard filtration $F^\ell C^{i,j} = 0$ if $i< \ell$ and $F^\ell C^{i,j} = C^{i,j}$ if $i \geq \ell$.
The associated spectral sequence yields:
\begin{proposition}\label{prop: spectral sequence}
    Let $J$ be a poset, $\AA$ a presheaf of associative $k$-algebras on $J$, and $\MM$ a presheaf of $\AA$-bimodules.
    Consider the local system on $N(J)$ with coefficients 
    \[
        \underline{HH}^j(\AA,\MM)(\sigma) = HH^j(\AA(\max \sigma),\MM(\min \sigma)).
    \]
    There is a first-quadrant spectral sequence with $E_2$-page
    \[ E_2^{i,j} = H^i(N(J), \underline{HH}^j(\AA,\MM))\]
    converging to $HH^{i+j}(\AA,\MM)$.
\end{proposition}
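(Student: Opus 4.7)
The plan is to realize the statement as the standard spectral sequence associated to the filtered double complex $(C^{\ast,\ast}(\AA,\MM), F^\ell)$. First I would check that $F^\ell$ is a decreasing filtration of $\Tot C^{\ast,\ast}(\AA,\MM)$ by subcomplexes: both the horizontal and vertical differentials preserve the condition $i \geq \ell$. Because $C^{\ast,\ast}$ lies in the first quadrant, the induced filtration on $\Tot^n$ is bounded, with $F^0\Tot^n = \Tot^n$ and $F^{n+1}\Tot^n = 0$. This guarantees strong convergence of the resulting spectral sequence to $H^{i+j}(\Tot C^{\ast,\ast}(\AA,\MM)) = HH^{i+j}(\AA,\MM)$.

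Next I would identify the low pages. By construction $E_0^{i,j} = C^{i,j}(\AA,\MM)$ with $d_0$ equal to the vertical (Hochschild) differential. Taking vertical cohomology commutes with the product over $N(J)^i$ appearing in \eqref{eq: gerstenhaber-schack-complex} because products are exact in $k$-vector spaces, so
\[ E_1^{i,j} = \prod_{\sigma \in N(J)^i} HH^j(\AA(\max\sigma), \MM(\min\sigma)) = C^i(N(J), \underline{HH}^j(\AA,\MM)),\]
the simplicial $i$-cochains on $N(J)$ with values in the local system $\underline{HH}^j(\AA,\MM)$. The induced $d_1$ is the horizontal differential of the double complex, which by the definition of $C^{\ast,\ast}(\AA,\MM)$ is precisely the simplicial coboundary with respect to this local system. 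Hence $E_2^{i,j} = H^i(N(J), \underline{HH}^j(\AA,\MM))$, and the proposition then follows from the strong convergence noted above.

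This argument is more bookkeeping than theorem, and I do not expect a genuine obstacle. The one subtle point is commuting cohomology with the (possibly infinite) product defining $C^{i,j}$ at the $E_1$-stage; this is immediate over a field, but would be the place to pause if the category of coefficients were less cooperative.
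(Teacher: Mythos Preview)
Your argument is correct and is exactly the paper's approach: the proposition is stated as the spectral sequence of the column filtration $F^\ell$ on the first-quadrant double complex $C^{\ast,\ast}(\AA,\MM)$, and the paper gives no further detail. Your identification of the $E_1$- and $E_2$-pages and the remark on commuting cohomology with products simply make explicit what the paper leaves implicit.
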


In our situation, the algebra $\DD_X$ is the union of subalgebras $\DD^r_X$. The following Lemma relates Hochschild cohomology of $\DD^r_X$ and $\DD_X$.

\begin{lemma}\label{lemma: inverse-limit}
    Let $J$ be a poset,
    $\AA$ a presheaf of associative $k$-algebras on $J$ with a complete filtration by subalgebras $\AA^0\subseteq \AA^1\subseteq \cdots \subseteq \AA$. Then for any presheaf of $\AA$-bimodules $\MM$, there are short exact sequences 
    \[ 0 \to R^1 \varprojlim_r HH^{m-1}(\AA^r,\MM) \to HH^m(\AA,\MM) \to \varprojlim_r HH^m(\AA^r,\MM) \to 0\]
    for all $m \geq 0$.
\end{lemma}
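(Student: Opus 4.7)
The plan is to realize the Gerstenhaber-Schack complex of $\AA$ as the honest inverse limit of the Gerstenhaber-Schack complexes of the $\AA^r$, with surjective transition maps, and then invoke the standard Milnor/Roos short exact sequence for the cohomology of an inverse limit of cochain complexes.

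First I would unwind definitions. Since the filtration $\AA^0 \subseteq \AA^1 \subseteq \cdots$ is complete, for any open $U$ in the indexing poset $J$ we have $\AA(U) = \varinjlim_r \AA^r(U)$ as $k$-vector spaces; since the tensor product of vector spaces commutes with filtered colimits, this gives $\AA(U)^{\otimes j} = \varinjlim_r \AA^r(U)^{\otimes j}$ for every $j \geq 0$. Dualizing into $\MM(V)$ turns this into an inverse limit:
\[
    \Hom_k\bigl(\AA(U)^{\otimes j}, \MM(V)\bigr) = \varprojlim_r \Hom_k\bigl(\AA^r(U)^{\otimes j}, \MM(V)\bigr).
\]
Taking $U = \max\sigma$ and $V = \min\sigma$ and then the product over $\sigma \in N(J)^i$, and noting that products commute with inverse limits, yields $C^{i,j}(\AA,\MM) = \varprojlim_r C^{i,j}(\AA^r,\MM)$. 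The totalization in degree $m$ is a finite product over $(i,j)$ with $i+j=m$, so $\Tot C^{\bullet,\bullet}(\AA,\MM) = \varprojlim_r \Tot C^{\bullet,\bullet}(\AA^r,\MM)$ as cochain complexes.

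Next I would verify that the structure maps are degreewise surjective. Each inclusion $\AA^r(U) \hookrightarrow \AA^{r+1}(U)$ is a split inclusion of $k$-vector spaces, hence so is $\AA^r(U)^{\otimes j} \hookrightarrow \AA^{r+1}(U)^{\otimes j}$. Restriction along a split inclusion is surjective on $\Hom_k(-,\MM(V))$, so $C^{i,j}(\AA^{r+1},\MM) \twoheadrightarrow C^{i,j}(\AA^r,\MM)$ for every $(i,j)$, and the same surjectivity persists after taking products over simplices and the totalization.

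Finally I would quote (or quickly prove) the Milnor/Roos short exact sequence: for any tower $(C_r^\bullet)_{r\geq 0}$ of cochain complexes of $k$-vector spaces with surjective transition maps, one has $R\varprojlim_r C_r^\bullet = \varprojlim_r C_r^\bullet$, and there are natural short exact sequences
\[
    0 \to R^1\varprojlim_r H^{m-1}(C_r^\bullet) \to H^m(\varprojlim_r C_r^\bullet) \to \varprojlim_r H^m(C_r^\bullet) \to 0
\]
arising from the two spectral sequences attached to the exact triangle $\varprojlim_r C_r^\bullet \to \prod_r C_r^\bullet \xrightarrow{1-\mathrm{shift}} \prod_r C_r^\bullet$, which is genuinely short exact in each cohomological degree by the surjectivity just established. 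Applying this to $C_r^\bullet = \Tot C^{\bullet,\bullet}(\AA^r,\MM)$ and using $H^m(\Tot C^{\bullet,\bullet}(\AA^r,\MM)) = HH^m(\AA^r,\MM)$ gives the claimed sequence.

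The only nontrivial point is the verification that the tower of Gerstenhaber-Schack complexes really is an inverse limit on the nose with surjective transition maps; once that is in hand the Milnor sequence is automatic. The potential subtlety — that one might need a genuinely derived inverse limit — is circumvented precisely by the splitting of vector-space inclusions, which forces the naive and derived limits of the complexes to agree.
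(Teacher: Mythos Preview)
Your argument is correct and follows the same route as the paper: identify the Gerstenhaber--Schack complex of $\AA$ as the honest inverse limit of those for the $\AA^r$, use that $k$ is a field to get degreewise surjectivity (Mittag--Leffler) of the tower, and then invoke the Milnor short exact sequence (the paper cites \cite[Theorem 3.5.8]{weibel}). Your write-up is more explicit about why totalization and the product over simplices commute with the inverse limit, but the strategy is identical.
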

\begin{proof}
    Fix an $i$-simplex $\sigma \in N(J)$, and let $U_0 = \min \sigma, U_i = \max \sigma$.
    Then we have $\AA(U_i) = \cup_r \AA^r(U_i)$.
    Since $k$ is a field, the maps 
    \begin{equation*}
       \Hom_k(\AA(U_i)^{\otimes j}, \MM(U_0)) 
       \to \Hom_k(\AA^{r+1}(U_i)^{\otimes j},\MM(U_0))
       \to \Hom_k(\AA^{r}(U_i)^{\otimes j},\MM(U_0))
    \end{equation*}
    are surjective for all $j \geq 0$.
    Hence, the sequence of complexes $C^{i,j}(\AA^r,\MM)$ satisfies the Mittag-Leffler condition on cochains and 
    \[ C^{i,j}(\AA,\MM) = \varprojlim_r C^{i,j}(\AA^r,\MM).\] Now the Proposition follows from \cite[Theorem 3.5.8]{weibel}.
\end{proof}

\section{Hochschild cohomology of differential operators}

For the rest of the paper, the field $k$ is of characteristic $p > 0$ and $X$ is a smooth variety over $k$. As $X$ is separated, the intersection of two affine opens is an affine open. 
Hence there exists a covering of $X$ by nonempty open affines closed under finite intersections; let $\mathfrak U$ be such a covering.
We will compute the cohomology of the Gerstenhaber-Schack complex for $\DD_X$ with respect to the poset $\mathfrak U$. 
Here is a sketch of the argument: we will first compute $HH^*(\DD^r_X,\DD_X)$ locally, then apply the spectral sequence of Proposition \ref{prop: spectral sequence} to compute $HH^*(\DD^r_X,\DD_X)$.
Finally, Lemma \ref{lemma: inverse-limit} will relate $HH^*(\DD^r_X,\DD_X)$ with $HH^*(\DD_X)$.

Given affine $U$, let $\DD(U) = \Gamma(U,\DD_U)$, and similarly for $\DD^r, \OO,$ etc..

\begin{proposition}\label{proposition: pd-dr}
    Let $V \subseteq U$ be smooth affine varieties over $k$. Then 
    the inclusion $\OO(V^{(r)}) \to \DD(V)$ induces 
    \[
        HH^0(\DD^r(U), \DD(V)) \cong \OO(V^{(r)})
    \]
    and $HH^m(\DD^r(U), \DD(V)) = 0$ for all $m > 0$.
\end{proposition}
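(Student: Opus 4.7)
The plan is a three-step reduction: first localize from $U$ to $V$, then apply Morita equivalence between $\DD^r(V)$ and $\OO(V^{(r)})$, and finally compute on the smooth variety $V^{(r)}$ via a Koszul resolution.

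\textbf{Step 1 (Localize from $U$ to $V$).} Since $\DD^r$ is quasi-coherent on $X^{(r)}$, restriction realizes $\DD^r(V) \cong \DD^r(U) \otimes_{\OO(U^{(r)})} \OO(V^{(r)})$ as a central localization along the flat map $\OO(U^{(r)}) \to \OO(V^{(r)})$. Hence $\DD^r(V)^e$ is flat over $\DD^r(U)^e$, so a projective resolution of $\DD^r(U)$ over $\DD^r(U)^e$ base changes to one of $\DD^r(V)$ over $\DD^r(V)^e$, and adjunction gives
\[
    HH^*(\DD^r(U), \DD(V)) \cong HH^*(\DD^r(V), \DD(V)).
\]

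\textbf{Step 2 (Morita).} By Proposition~\ref{proposition: subalgebra}, $\DD^r(V) = \End_{\OO(V^{(r)})}(\OO(V))$, and $\OO(V)$ is a finite free progenerator over $\OO(V^{(r)})$, so $\DD^r(V)$ is Morita-equivalent to $\OO(V^{(r)})$. The equivalence of bimodule categories preserves Ext and respects the $\End$-construction. Let $e \in \DD^r(V)$ be the idempotent projecting $\OO(V)$ onto a rank-one $\OO(V^{(r)})$-summand identified with $\OO(V^{(r)})$. The Morita partner of $\DD^{r+k}(V) = \End_{\OO(V^{(r+k)})}(\OO(V))$ is $e\DD^{r+k}(V)e = \End_{\OO(V^{(r+k)})}(\OO(V^{(r)}))$, which by Proposition~\ref{proposition: subalgebra} applied to the smooth variety $V^{(r)}$ (noting $(V^{(r)})^{(k)} = V^{(r+k)}$) equals the subalgebra $\DD^k$ of $\DD(V^{(r)})$. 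Passing to the colimit over $k$, $\DD(V)$ corresponds to the full ring $\DD(V^{(r)})$, regarded as an $\OO(V^{(r)})$-bimodule via the inclusion of degree-zero operators. Thus
\[
    HH^*(\DD^r(V), \DD(V)) \cong HH^*(\OO(V^{(r)}), \DD(V^{(r)})).
\]

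\textbf{Step 3 (Koszul).} Let $R = \OO(V^{(r)})$. Since $V^{(r)}$ is smooth, the Koszul resolution of $R$ over $R^e$ identifies $HH^*(R, \DD_R)$ with the cohomology of the complex $\DD_R \otimes_R \wedge^\bullet T_R$; in local coordinates $t_1, \ldots, t_n$, the differential is $d(m \otimes \omega) = \sum_i [t_i, m] \otimes \xi_i \wedge \omega$. Using the divided-power basis $\DD_R = \bigoplus_q R \cdot \partial^{(q)}$ from Proposition~\ref{proposition: basis-of-divided-powers} together with the commutator $[t_i, \partial^{(q)}] = -\partial^{(q-e_i)}$, locally the complex factors as $R \otimes_k K^\bullet$, where $K^\bullet$ is the Koszul complex of the divided-power algebra $\Gamma^\bullet(k^n)$ with shift-down operators. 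The one-variable subcomplex $\Gamma^* \xrightarrow{\delta} \Gamma^*$ has kernel $k \cdot \gamma^{(0)}$ and is surjective, so by K\"unneth $K^\bullet$ is acyclic in positive degrees with $H^0 = k$. Since $V^{(r)}$ is affine, local acyclicity of this complex of quasi-coherent sheaves gives global acyclicity, so $HH^0(R, \DD_R) = R$ and $HH^m(R, \DD_R) = 0$ for $m > 0$.

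\textbf{Main obstacle.} The delicate point is Step~2: the Morita partner of $\DD(V)$ is the \emph{non-symmetric} $\OO(V^{(r)})$-bimodule $\DD(V^{(r)})$. Were the bimodule structure symmetric, HKR would produce $HH^m \cong \wedge^m T_R$, which is nonzero in degrees $1, \ldots, \dim X$. The correct identification relies on Morita's monoidal structure: $\OO(V)$ and $\OO(V^{(r)})$ are corresponding progenerators, so the endomorphism rings $\End_{\OO(V^{(r+k)})}$ applied to either correspond, producing precisely the non-symmetric bimodule that makes the Koszul computation vanish.
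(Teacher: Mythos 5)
Your proof is correct and rests on the same two pillars as the paper's argument: Morita invariance via the idempotent coming from the Frobenius splitting, with the bimodule $\DD(V)$ corresponding to $\DD(V^{(r)})$, followed by the divided-power Koszul computation in which commutator with a coordinate acts as a surjective shift-down operator with kernel $\OO$, finished by K\"unneth. The only structural difference is your Step 1: the paper skips the reduction $HH^*(\DD^r(U),\DD(V)) \cong HH^*(\DD^r(V),\DD(V))$ entirely by applying Morita directly to $\DD^r(U) = \End_{\OO(U^{(r)})}(\OO(U))$, so its Koszul step treats the pair $(\OO(U^{(r)}), \DD(V^{(r)}))$ (the $r=0$ case of the statement for $V^{(r)}\subseteq U^{(r)}$) rather than the diagonal case $U=V$; this makes your flat-base-change and adjunction bookkeeping unnecessary, though it is valid --- just note that the base-changed resolution resolves $\DD^r(V)$ because $\OO(U^{(r)}) \to \OO(V^{(r)})$ is a flat \emph{epimorphism}, so that $\DD^r(V)\otimes_{\DD^r(U)}\DD^r(V)\cong\DD^r(V)$; flatness alone does not suffice. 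Your colimit identification $e\DD(V)e = \bigcup_k \End_{\OO(V^{(r+k)})}(\OO(V^{(r)})) = \DD(V^{(r)})$ is a nice expansion of the paper's ``easily checked'' step, and your locality-plus-quasi-coherence appeal in Step 3 is at the same level of detail as the paper's ``the claim is local.''
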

\begin{proof}
    By Proposition \ref{proposition: subalgebra}, 
    the algebra $\DD^r(U)$ is the matrix ring $\End_{\OO(U^{(r)})}(\OO(U))$.
    It is well-known that Hochschild cohomology is Morita-invariant.
    Under the Morita equivalence $\DD^r(U)\modcat \simeq \OO(U^{(r)})\modcat$, the $\DD^r(U)$-bimodule $\DD(V)$ is sent exactly to $\DD(V^{(r)})$.
    This is seen as follows: for affine $U$, the inclusion $\OO(U^{(r)}) \to \OO(U)$ is split as $\OO(U^{(r)})$-modules \cite[1.1.6 Proposition]{brionkumar}.
    If $e \in \DD^r(U)$ is the idempotent corresponding to the splitting, then it is easily checked that the map $e\DD(V)e \to \End_k(\OO(V^{(r)}))$ induces an isomorphism $e \DD(V)e \cong \DD(V^{(r)})$.

    Thus, it suffices to show the claim when $r=0$,
    that is, to compute Hochschild cohomology of $(\OO(U),\DD(V))$. The claim is local, so we may assume that $U$ has local coordinates $x_1,\ldots, x_n$ such that $\{x_i \otimes 1 - 1 \otimes x_i\}$ generate the ideal of the diagonal in $U \times U$. 
    Let $P_\ast \to \OO(U)$ be the Koszul complex for $\OO(U)$ as an $\OO(U)$-bimodule corresponding to the regular sequence $x_1,\ldots, x_n$. Then $HH^\ast(\OO(U),\DD(V))$ is computed by the complex $\Hom_{\OO(U)\otimes \OO(U)}(P_\ast, \DD(V))$.   
    Using the basis of Proposition \ref{proposition: basis-of-divided-powers}, the complex above is the tensor product over $\OO(V)$ of the two-term complexes 
    \begin{equation}\label{equation: pd-de-Rham-differential}
    \begin{tikzcd}
        \OO(V) \langle \partial_i\rangle 
        \arrow[r,"{[x_i,-]}"] 
        & \OO(V)\langle \partial_i\rangle
    \end{tikzcd}
    \end{equation} 
    for $1 \leq i \leq n$,
    where $\OO(V)\langle - \rangle$ means free divided power algebra. The differential \eqref{equation: pd-de-Rham-differential} is surjective with kernel $\OO(V)$. Now the Künneth formula finishes the proof.
\end{proof}

\begin{remark}
    The complex constructed in the proof above coincides with the PD de Rham complex for a free divided power algebra. See \cite[Lemma 6.11]{berthelot-ogus-crystalline}.
\end{remark}

\begin{proposition}\label{proposition: cohomology-of-subalgebra}
    For a smooth variety $X$ over $k$,
    there is an isomorphism
    \[
        HH^m(\DD^r_X,\DD_X) \cong H^m(X,\OO_X^{(r)})
    \]
    for all $m \geq 0$
    such that the following diagram commutes:
    \begin{equation*}
    \begin{tikzcd}
        HH^m(\DD^{r+1}_X,\DD_X) \arrow[r, "\sim"] \arrow[d]
        & H^m(X,\OO_X^{(r+1)}) \arrow[d, "Fr"] \\
        HH^m(\DD^{r}_X,\DD_X) \arrow[r, "\sim"] 
        & H^m(X,\OO_X^{(r)})
    \end{tikzcd}
    \end{equation*}
\end{proposition}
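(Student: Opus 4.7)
The plan is to apply the Gerstenhaber-Schack spectral sequence of Proposition~\ref{prop: spectral sequence} to $(\AA,\MM) = (\DD^r_X, \DD_X)$ on the poset $\mathfrak U$. By Proposition~\ref{proposition: pd-dr}, for any simplex $\sigma$ of $N(\mathfrak U)$,
\[ HH^j(\DD^r(\max\sigma), \DD(\min\sigma)) = \begin{cases} \OO((\min\sigma)^{(r)}) & j=0, \\ 0 & j>0. \end{cases} \]
Thus the local system $\underline{HH}^j$ is zero off row $j=0$, the spectral sequence degenerates at $E_2$, and
\[ HH^m(\DD^r_X,\DD_X) \cong H^m(N(\mathfrak U),\underline{HH}^0), \]
where $\underline{HH}^0$ assigns $\OO((\min\sigma)^{(r)})$ to $\sigma$, with transition maps given by restrictions in the presheaf $\OO_X^{(r)}$.

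The next step identifies the right-hand side with sheaf cohomology $H^m(X,\OO_X^{(r)})$. Unwinding the Gerstenhaber-Schack definitions, the simplicial cochain complex can be written as $\prod_{U_0 \subsetneq \cdots \subsetneq U_i} \OO(U_0^{(r)})$, a Čech-type cochain complex for the presheaf $\OO_X^{(r)}$ on the basis $\mathfrak U$. Since $\mathfrak U$ is an affine cover closed under intersections and $\OO_X^{(r)} \cong Fr^r_\ast\OO_{X^{(r)}}$ is acyclic on each member of $\mathfrak U$ (being a quasi-coherent sheaf on $X^{(r)}$ pushed forward under the homeomorphism $Fr^r$), the standard Čech-to-sheaf comparison yields $H^m(N(\mathfrak U),\underline{HH}^0) \cong H^m(X,\OO_X^{(r)})$.

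For the commutativity of the diagram, the left vertical map is induced by restriction of Gerstenhaber-Schack cochains along the inclusion $\DD^r_X \hookrightarrow \DD^{r+1}_X$. By functoriality of the spectral sequence, at the $E_2$-page this corresponds to the map of local systems which at each simplex $\sigma$ is the inclusion $\OO((\min\sigma)^{(r+1)}) \hookrightarrow \OO((\min\sigma)^{(r)})$. Under the identification with sheaf cohomology, this is exactly the map on $H^m$ induced by the inclusion of sheaves $\OO_X^{(r+1)} \hookrightarrow \OO_X^{(r)}$, which is by definition the relative Frobenius.

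The main technical obstacle is the Čech-to-sheaf comparison in the second step: one must carefully verify that the chain-indexed simplicial cochain complex on $N(\mathfrak U)$ with the local system $\underline{HH}^0$ is genuinely quasi-isomorphic to an acyclic resolution of $\OO_X^{(r)}$. I would handle this either by combining the nerve complex with a Čech-Cartan-Eilenberg resolution and exploiting acyclicity on affines, or by invoking the general Lowen-Van den Bergh comparison between Gerstenhaber-Schack cohomology and sheaf cohomology for presheaves on a covering basis.
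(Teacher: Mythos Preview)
Your proposal is correct and follows essentially the same approach as the paper: apply the spectral sequence of Proposition~\ref{prop: spectral sequence}, use Proposition~\ref{proposition: pd-dr} to collapse it to the $j=0$ row, then identify nerve cohomology with sheaf cohomology via \v{C}ech theory and acyclicity of $\OO_X^{(r)}$ on affines. For the technical obstacle you flag, the paper resolves the nerve-to-\v{C}ech comparison concretely by invoking \cite[Theorem 1.4]{abelsholz93}, which shows that when $\mathfrak U$ is closed under intersections, the map $(U_0,\ldots,U_i) \mapsto U_0 \cap \cdots \cap U_i$ induces a homotopy equivalence $N_{\check{C}ech}(\mathfrak U) \to N(\mathfrak U)$.
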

\begin{proof}
    Consider the spectral sequence of Proposition \ref{prop: spectral sequence} for $(\DD^r_X,\DD_X)$. By Proposition \ref{proposition: pd-dr},
    for any pair $V\subseteq U$ in $\mathfrak U$,
    we have $HH^0(\DD^r(U),\DD(V)) = \OO(V)^{(r)}$ and $HH^{j}(\DD^r(U),\DD(V)) = 0$ for $j > 0$. 
    Thus, the spectral sequence has $E_2$-page 
    \[ E_2^{ij} = \begin{cases}
            H^i(N(\mathfrak U),\OO_X^{(r)}) & j = 0 \\
            0 & j > 0,
        \end{cases}
    \]
    and so collapses at $E_2$.
    
    We now relate $N(\mathfrak U)$, the nerve in the sense of category theory, to \v{C}ech cohomology with respect to $\mathfrak U$.
    Let $N_{\check{C}ech}(\mathfrak U)$ be the \v{C}ech nerve of $\mathfrak U$.
    As $\mathfrak U$ is closed under intersection, Theorem 1.4 of \cite{abelsholz93} implies that the map $N_{\check{C}ech}(\mathfrak U) \to \mathfrak U$ sending $(U_0,\ldots, U_i) \in \mathfrak U^{i+1}$ with nonempty intersection to $U_0 \cap \cdots \cap U_i$ induces a homotopy equivalence $N_{\check{C}ech}(\mathfrak U) \to N(\mathfrak U)$.
    This map induces 
    \[ H^i(N(\mathfrak U),\OO_X^{(r)}) \cong H^i(N_{\check{C}ech}(\mathfrak U),\OO_X^{(r)}) = \check{H}^i(\mathfrak U,\OO_X^{(r)}).\]
    The sheaf $\OO_X^{(r)}$ is quasi-coherent on $X^{(r)}$
    and the $r$-fold relative Frobenius $Fr^r: X \to X^{(r)}$ is a homeomorphism, so $\OO_X^{(r)}$ is acyclic for every affine open subset of $X$.
    As $\mathfrak U$ consists of affine open subsets of $X$ and is closed under intersection, we obtain 
    $E_2^{i,0} = H^i(X,\OO_X^{(r)})$. This proves the claim.
\end{proof}

\begin{theorem}\label{theorem: main}
    Let $k$ be a field of positive characteristic and $X$ be a smooth variety over $k$.
    Then there are short exact sequences 
    \[ 0 \to R^1\varprojlim_r H^{m-1}(X,\OO_X^{(r)}) \to HH^m(\DD_X) \to \varprojlim_r H^m(X,\OO_X^{(r)}) \to 0\]
    for all $m \geq 0$,
    where the limit is taken over the sequence of relative Frobenius maps $Fr: X^{(r)} \to X^{(r+1)}$.
\end{theorem}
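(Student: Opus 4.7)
The plan is to combine Lemma \ref{lemma: inverse-limit} with Proposition \ref{proposition: cohomology-of-subalgebra} essentially directly. First I would fix the poset $J = \mathfrak U$ from the start of the section, with $\AA$ the presheaf $U \mapsto \DD(U)$, $\MM = \AA$, and the filtration by the subpresheaves $\AA^r : U \mapsto \DD^r(U)$. Proposition \ref{proposition: subalgebra} says that $\DD_X = \bigcup_r \DD^r_X$ sectionwise, so this filtration is complete in the sense required by Lemma \ref{lemma: inverse-limit}.

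Applying that lemma then yields, for each $m \geq 0$, a short exact sequence
\[ 0 \to R^1\varprojlim_r HH^{m-1}(\DD^r_X, \DD_X) \to HH^m(\DD_X, \DD_X) \to \varprojlim_r HH^m(\DD^r_X, \DD_X) \to 0. \]
Plugging in the identifications $HH^m(\DD^r_X, \DD_X) \cong H^m(X, \OO_X^{(r)})$ from Proposition \ref{proposition: cohomology-of-subalgebra}, together with the compatibility of these isomorphisms with the Frobenius transition maps (also supplied by that proposition), transforms this sequence into the one stated in the theorem.

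The one point that deserves a sentence of justification is that the Gerstenhaber--Schack cohomology $HH^m(\DD_X, \DD_X)$ computed with respect to the basis $\mathfrak U$ of affine opens really is the $HH^m(\DD_X)$ appearing on the left-hand side of the theorem. This was flagged in the introduction: since $\DD_X$ is quasi-coherent and $\mathfrak U$ is an affine basis closed under intersection, the Lowen--Van den Bergh comparison \cite[§7.3]{lowenvandenbergh05} identifies the Gerstenhaber--Schack cohomology with the Hochschild cohomology of the category of $\DD_X$-modules, so the two notations refer to the same object.

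I do not expect any substantive obstacle: the real content -- the Morita reduction to $\OO_X^{(r)}$, the PD Koszul computation collapsing $HH^j(\DD^r(U), \DD(V))$ to $\OO(V^{(r)})$ in degree $0$, the identification of $N(\mathfrak U)$-cohomology with \v{C}ech cohomology of $\OO_X^{(r)}$, and the $\varprojlim^1$ short exact sequence via Mittag-Leffler on cochains -- has already been carried out in the preceding three results. The theorem is therefore a formal assembly of these inputs, and the proof reduces to a short paragraph invoking them in order.
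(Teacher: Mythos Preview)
Your proposal is correct and matches the paper's proof exactly: the paper's entire argument for Theorem \ref{theorem: main} is the one-line ``Combine Proposition \ref{proposition: cohomology-of-subalgebra} with Lemma \ref{lemma: inverse-limit},'' which is precisely what you outline. Your extra paragraph on the Lowen--Van den Bergh comparison is accurate but not strictly needed here, since in this paper $HH^*(\DD_X)$ is \emph{defined} as Gerstenhaber--Schack cohomology over the affine cover $\mathfrak U$ (a cover closed under intersections, not necessarily a basis).
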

\begin{proof}
    Combine Proposition \ref{proposition: cohomology-of-subalgebra} with Lemma \ref{lemma: inverse-limit}.
\end{proof}

\begin{remark}\label{remark: ogus}
    In \cite[(2.4),(3.6)]{ogus75}, Ogus shows that there is an exact sequence 
    \[ 0 \to R^1 \varprojlim_r H^{m-1}(X,\OO_X^{(r)}) \to \Ext^m_{\DD_X}(\OO_X,\OO_X) \to \varprojlim_r H^m(X,\OO_X^{(r)}) \to 0\]
    for all $m \geq 0$. The connection with Theorem \ref{theorem: main} is this: for any abelian category $\mathcal C$ and object $C$ of $\mathcal C$ there is the \emph{Chern character} $ch_C: HH^*(\mathcal C) \to \Ext^*_{\mathcal C}(C,C)$ \cite{lowenvandenbergh05}.
    In fact, the Chern character induces an isomorphism 
    \begin{equation} \label{eq: chern-character}
        ch_{\OO_X}: HH^*(\DD_X) \cong \Ext^*_{\DD_X}(\OO_X, \OO_X).
    \end{equation}
    When $\mathcal C = A\modcat$ for an associative algebra $A$ and $C \in A\modcat$, the characteristic morphism is implemented by the morphism of Hochschild complexes $C^*(A,A) \to C^*(A,\End_k(C))$. 
    It may be checked for smooth affine $X$ that \eqref{eq: chern-character} is an isomorphism, e.g.\ by suitably modifying the argument of Proposition \ref{proposition: cohomology-of-subalgebra}.
    The statement for a general smooth variety $X$ may be reduced to the affine case by checking that the Chern character in \cite{lowenvandenbergh05}, is suitably natural, e.g.\ with respect to the Mayer-Vietoris sequence for Hochschild cohomology.
\end{remark}

\section{Examples}

\begin{example}
    Let $X = \mathbb A^1_k$, so that $\OO(X) = k[t]$.
    Then $\OO(X^{(r)}) = k[t^{p^r}]$
    and the relative Frobenius $X^{(r)} \to X^{(r+1)}$ is induced by the inclusion $k[t^{p^r}] \to k[t^{p^{r+1}}]$. Theorem \ref{theorem: main} implies that $HH^0(\DD_{\mathbb A^1}) = k$, 
    \begin{equation*}
        HH^1(\DD_{\mathbb A^1}) \cong R^1 \varprojlim_r k[t^{p^r}],
    \end{equation*}
    and $HH^m(\DD_{\mathbb A^1}) = 0$ when $m > 1$.
    Let us compute $R^1\varprojlim_r k[t^{p^r}]$.
    The short exact sequence of diagrams 
\[\begin{tikzcd}
	0 & {\{k[t^{p^r}]\}_r} & {\{k[t]\}_r} & {\{k[t]/k[t^{p^r}]\}_r} & 0
	\arrow[from=1-1, to=1-2]
	\arrow[from=1-2, to=1-3]
	\arrow[from=1-3, to=1-4]
	\arrow[from=1-4, to=1-5]
\end{tikzcd}\]
    gives a medium-length exact sequence of derived inverse limits
\[\begin{tikzcd}
	0 & k & {k[t]} & {\varprojlim\limits_r k[t]/k[t^{p^r}]} & {R^1\varprojlim\limits_r k[t^{p^r}]} & 0
	\arrow[from=1-1, to=1-2]
	\arrow[from=1-2, to=1-3]
	\arrow[from=1-3, to=1-4]
	\arrow[from=1-4, to=1-5]
	\arrow[from=1-5, to=1-6]
\end{tikzcd}.\]
    Hence $R^1 \varprojlim_r k[t^{p^r}]$ is identified with $(\varprojlim_r k[t]/k[t^{p^r}])/k[t]$. The space $\varprojlim_r k[t]/k[t^{p^r}]$ embeds into $k\series{t}$ as those power series which may be written as a formal sum $f= \sum_{r=0}^\infty f_r$ for $f_r \in k[t^{p^r}]$.
    The identification $HH^1(\DD(X)) \cong R^1\varprojlim_r \OO(X^{(r)})$ sends such a power series $f$ to the commutator with $f$, which is well-defined since every element of $\DD(X)$ commutes with some $\OO(X^{(r)})$.
    This explains S.\ Paul Smith's observation from 1986 that taking the commutator with the series $t + t^p + t^{p^2} + \cdots$ gives an outer derivation of $\DD(\mathbb A_k^1)$ \cite[Proposition 2.4]{smith86}.
\end{example}

\begin{example}
    Suppose that $X$ is proper over $k$, so that $H^m(X,\OO_X)$ is finite-dimensional over $k$ for all $m\geq 0$. Pullback by Frobenius is a semilinear endomorphism of $H^m(X,\OO_X)$, and thus there is a canonical splitting
    \[ H^m(X,\OO_X) = H^m_n(X,\OO_X) \oplus H^m_s(X,\OO_X)\]
    into $Fr$-stable subspaces where the Frobenius acts nilpotently and as an isomorphism, respectively \cite[p. 143]{mumford70}. Then 
    $ \varprojlim_r H^m(X,\OO_X^{(r)}) \cong H^m_s(X,\OO_X)$
    via the projection $\varprojlim_r H^m(X,\OO_X^{(r)}) \to H^m(X,\OO_X)$, while
     $R^1 \varprojlim_r H^m(X,\OO_X^{(r)}) = 0$.
    We conclude $HH^m(\DD_X) \cong H^m_s(X,\OO_X)$ when $X$ is proper over $k$.
\end{example}  

\section{Cup product}
\label{section: cup-product}

Gerstenhaber and Schack defined a cup product on \eqref{eq: gerstenhaber-schack-complex} in terms of the cup product on the Hochschild complex.
Given simplices $\tau = (U_0 < \cdots < U_\ell)$ and  $\nu= (U_\ell < \cdots < U_i)$ in $N(J)$ with $\max \tau = \min \nu$, 
define $\tau \cup \nu = (U_0 < \cdots < U_\ell < \cdots < U_i)$.
For $\AA$ a presheaf of associative algebras on $J$, $\MM$ a presheaf of $\AA$-bimodules with compatible associative product, $\alpha \in C^m(\AA,\MM)$, and $\beta \in C^n(\AA,\MM)$, define 
\begin{equation}\label{eq: cup-product}
    (\alpha \cup \beta)^\sigma = \sum_{\sigma = \tau\cup \nu } (-1)^{|\nu|(m-|\tau|)}\alpha^\tau \varphi_\AA \cup \varphi_{\MM}\beta^\nu,
\end{equation} 
where $\varphi$ refers to structure maps of the presheaves $\AA$ and $\MM$, and $\cup$ on the right-hand side is the cup product on the usual Hochschild complex \cite[§6, p. 192]{gerstenhaberschack88}. 

\begin{proposition}
    Let $J$ be a poset, $\AA$ a presheaf of associative $k$-algebras on $J$, and $\MM$ a presheaf of $\AA$-bimodules with a compatible associative product.
    Then the spectral sequence 
    \[ E_2^{i,j} = H^i(N(J),\underline{HH}^j(\AA,\MM)) \implies HH^{i+j}(\AA,\MM)\]
    of Proposition \ref{prop: spectral sequence} is a spectral sequence of algebras with the usual product on $E_2$.
\end{proposition}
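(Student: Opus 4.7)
The plan is to show in three steps that the standard filtration gives a multiplicative spectral sequence and that the product on $E_2$ matches the simplicial cup product with local coefficients.

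\emph{Multiplicativity of the filtration.} Inspect \eqref{eq: cup-product}: a term contributing to $(\alpha \cup \beta)^\sigma$ comes from a decomposition $\sigma = \tau \cup \nu$ with $|\tau| + |\nu| = |\sigma|$. If $\alpha \in F^{p_1}$ and $\beta \in F^{p_2}$, then $\alpha^\tau = 0$ unless $|\tau| \geq p_1$ and $\beta^\nu = 0$ unless $|\nu| \geq p_2$, hence $(\alpha \cup \beta)^\sigma = 0$ unless $|\sigma| \geq p_1 + p_2$. So $F^{p_1} \cup F^{p_2} \subseteq F^{p_1 + p_2}$. Combined with the Leibniz rule for $\cup$ on the total complex with respect to the total differential (established by Gerstenhaber--Schack \cite[\S6]{gerstenhaberschack88}), this makes $\Tot C^{*,*}(\AA,\MM)$ into a filtered DG algebra, which automatically yields a spectral sequence of algebras.

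\emph{Product on $E_0$ and $E_1$.} Since \eqref{eq: cup-product} preserves bidegrees on the nose, the induced product on $E_0^{p,q} = C^{p,q}$ (equipped with the vertical Hochschild differential) is given by the same formula. Passing to the vertical cohomology, the induced product on $E_1^{p,q} = \prod_\sigma HH^q(\AA(\max\sigma), \MM(\min\sigma))$ sends a pair of classes represented by $(\alpha^\tau, \beta^\nu)$ with $\sigma = \tau \cup \nu$ to the Hochschild cup product $\alpha^\tau \varphi_\AA \cup \varphi_\MM \beta^\nu$, the structure maps being used to transport both factors into $HH^{m+n}(\AA(\max\sigma), \MM(\min\sigma))$ before multiplying.

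\emph{Identification on $E_2$.} The formula just described is exactly the Alexander--Whitney cup product on simplicial cochains of $N(J)$ with values in the local coefficient system $\underline{HH}^q(\AA,\MM)$: the structure maps of this local system along a face $U \leq V$ are $HH^q(\AA(V'), \MM(U')) \to HH^q(\AA(V),\MM(U))$ obtained by precomposition with $\varphi_\AA: \AA(V) \to \AA(V')$ and post-composition with $\varphi_\MM: \MM(U') \to \MM(U)$, which agree with the transports appearing in the formula. The $d_1$-differential is the simplicial coboundary, so passing to $E_2 = H^*(N(J), \underline{HH}^*(\AA,\MM))$ gives the usual simplicial cup product on local-coefficient cohomology with values in the graded ring $\underline{HH}^*(\AA,\MM)$.

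The main obstacle is sign bookkeeping: verifying that the sign $(-1)^{|\nu|(m-|\tau|)}$ in \eqref{eq: cup-product} coincides with the Koszul sign for the Alexander--Whitney cup product on $E_1$ and that the total differential's sign conventions mesh with the DG algebra structure. Once these signs are checked and the identification of the structure maps above is verified, the proposition follows.
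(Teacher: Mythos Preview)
Your proof is correct and follows the same approach as the paper, which simply observes that formula \eqref{eq: cup-product} makes the Gerstenhaber--Schack double complex a differential bigraded algebra, from which the multiplicativity of the spectral sequence and the identification of the $E_2$ product are standard. You have unpacked this one-line argument in considerably more detail than the paper provides.
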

\begin{proof}
    Formula \eqref{eq: cup-product} makes the Gerstenhaber-Schack double complex a differential bigraded algebra,
    and endows $E_2$ with the usual product structure.
\end{proof}

\begin{corollary}
    Let $k$ be a field of positive characteristic and $X$ be a smooth variety over $k$.
    Then the surjection $HH^\ast(\DD_X) \to \varprojlim_r H^\ast(X,\OO_X^{(r)})$ of Theorem \ref{theorem: main} is a map of rings.
\end{corollary}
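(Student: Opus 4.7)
The plan is to identify the surjection of Theorem \ref{theorem: main} with a composition of three maps and verify that each is a ring homomorphism.

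\textbf{Step 1: Factor the surjection.} First, I would note that the map in Theorem \ref{theorem: main} factors as
\[
    HH^m(\DD_X) \longrightarrow \varprojlim_r HH^m(\DD^r_X, \DD_X) \xrightarrow{\ \sim\ } \varprojlim_r H^m(X,\OO_X^{(r)}),
\]
where the first arrow is the canonical Milnor-sequence map from Lemma \ref{lemma: inverse-limit} and the second is the inverse limit of the isomorphisms of Proposition \ref{proposition: cohomology-of-subalgebra}. It thus suffices to show each arrow is a ring map.

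\textbf{Step 2: Restriction is a ring map.} The first arrow arises at the level of cochains from the restriction maps $C^{\ast,\ast}(\DD_X,\DD_X) \to C^{\ast,\ast}(\DD^r_X,\DD_X)$ induced by the inclusion of presheaves $\DD^r_X \hookrightarrow \DD_X$. The cup product formula \eqref{eq: cup-product} is natural in the algebra input $\AA$ (it uses only the structure maps of $\AA$ and cup products with values in $\MM$), so restriction is a map of differential bigraded algebras. Passing to the Milnor sequence, the induced map $HH^m(\DD_X) \to \varprojlim_r HH^m(\DD^r_X,\DD_X)$ is a ring map (with the obvious ring structure on the inverse limit).

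\textbf{Step 3: The spectral-sequence identification is a ring isomorphism.} By the Proposition of §\ref{section: cup-product}, the spectral sequence for $(\DD^r_X,\DD_X)$ is a spectral sequence of algebras, and by the proof of Proposition \ref{proposition: cohomology-of-subalgebra} it collapses at $E_2$ with $E_2^{i,0} = H^i(N(\mathfrak U),\OO_X^{(r)})$ and all other columns zero. Hence the edge isomorphism $HH^m(\DD^r_X,\DD_X) \cong H^m(N(\mathfrak U),\OO_X^{(r)})$ is multiplicative for the usual cup product on simplicial cohomology, provided we check that the $HH^0$-identification of Proposition \ref{proposition: pd-dr}, namely $HH^0(\DD^r(U),\DD(V)) \cong \OO(V^{(r)})$, is an isomorphism of rings; but both sides are identified with $\OO(V^{(r)})$ as subrings of $\DD(V)$ (degree-$0$ Hochschild cohomology of a bimodule algebra is a subring of the coefficients), so this is automatic.

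\textbf{Step 4: Compare with Čech.} The homotopy equivalence $N_{\check{C}ech}(\mathfrak U) \to N(\mathfrak U)$ from \cite{abelsholz93} is induced by a map of simplicial sets respecting intersections, hence induces a ring isomorphism on cohomology; the identification of Čech cohomology with sheaf cohomology of the quasi-coherent sheaf $\OO_X^{(r)}$ on the affine cover $\mathfrak U$ is well-known to be multiplicative. Finally, the transition maps $H^m(X,\OO_X^{(r+1)}) \to H^m(X,\OO_X^{(r)})$ are induced by the map of sheaves of rings $Fr: \OO_X^{(r+1)} \hookrightarrow \OO_X^{(r)}$ and so are ring maps, so the inverse limit carries a ring structure agreeing with the one from Step 2.

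The main subtlety I anticipate is Step 3: checking that the multiplicative structure surviving the collapse of a spectral sequence of algebras is indeed the classical cup product on $H^\ast(X,\OO_X^{(r)})$, and not merely some abstract $E_2$-pairing. This amounts to tracing through how the Gerstenhaber-Schack cup product \eqref{eq: cup-product} restricts to the horizontal edge $j=0$: when both cochains are concentrated in bidegree $(\ast,0)$, the Hochschild cup products in \eqref{eq: cup-product} degenerate to ordinary multiplication in $\OO(U_0)^{(r)}$, and the signs and simplicial decompositions $\sigma = \tau \cup \nu$ reproduce exactly the Alexander-Whitney/simplicial cup product on $N(\mathfrak U)$ with coefficients in the sheaf of rings $\OO_X^{(r)}$. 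Once this is observed, the corollary follows by composing the three ring maps from Steps 2--4.
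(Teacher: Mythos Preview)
Your proposal is correct and follows the same route the paper intends: the paper states the Corollary with no proof, treating it as immediate from the preceding Proposition that the Gerstenhaber--Schack spectral sequence is multiplicative with the usual product on $E_2$. You have simply unpacked what ``immediate'' means here---naturality of the cup product \eqref{eq: cup-product} under restriction along $\DD^r_X \hookrightarrow \DD_X$, multiplicativity of the collapsed edge map, and compatibility of the nerve/\v{C}ech/sheaf identifications with products---which is exactly the content the paper leaves implicit.
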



\printbibliography

\end{document}